\newtheorem{thm}{Theorem}[section]
\theoremstyle{definition}
\newtheorem{defn}[thm]{Definition}
\theoremstyle{remark}
\newtheorem{rem}[thm]{Remark}
\numberwithin{equation}{section}
\begin{document}

\title[Fractional Herglotz Variational Principles]{Fractional 
Herglotz Variational Principles\\
with Generalized Caputo Derivatives}
  
\author[R. Garra]{Roberto Garra}
\address{Dipartimento di Scienze Statistiche, 
``Sapienza'' Universit\`a di Roma, Rome, Italy}
\email{roberto.garra@sbai.uniroma1.it}

\author[G. S. Taverna]{Giorgio S. Taverna}
\address{Institute for Climate and Atmospheric Science, 
University of Leeds, Leeds, UK}
\email{eegst@leeds.ac.uk}
                
\author[D. F. M. Torres]{Delfim F. M. Torres}
\address{Center for Research and Development in Mathematics and Applications (CIDMA),\\
Department of Mathematics, University of Aveiro, 3810-193 Aveiro, Portugal}
\email{delfim@ua.pt}


\subjclass{26A33; 49K05; 49S05.}

\keywords{Fractional variational principles,
Herglotz problem, Euler--Lagrange equations,
generalized fractional operators.}

\date{Submitted: December 16, 2016}

    
\begin{abstract}
We obtain Euler--Lagrange equations,
transversality conditions and a Noether-like
theorem for Herglotz-type variational problems 
with Lagrangians depending on generalized 
fractional derivatives. As an application, 
we consider a damped harmonic oscillator 
with time-depending mass and elasticity, 
and arbitrary memory effects.
\end{abstract}


\maketitle


\section{Introduction}

Fractional variational principles and their applications
is a subject under strong current research 
\cite{MR3443073,MR3331286,book}.
For classical fields with fractional derivatives, 
by using the fractional Lagrangian formulation,
we can refer to \cite{MR2163462}. An Hamiltonian approach to
fractional problems of the calculus of variations is given
in \cite{MR2279972}, where the Hamilton equations of motion 
are obtained in a manner similar to the one found in 
classical mechanics. In addition, classical fields with fractional 
derivatives are investigated using the Hamiltonian formalism \cite{MR2279972}. 
A method for finding fractional Euler--Lagrange equations with 
Caputo derivatives, by making use of a fractional generalization of the classical Fa\'a 
di Bruno formula, can be found in \cite{Baleanu=Referee:asks:self:citation}.
There the fractional Euler--Lagrange and Hamilton equations are obtained 
within the so called $1 + 1$ field formalism \cite{Baleanu=Referee:asks:self:citation}.
For discrete versions of fractional derivatives with a nonsingular 
Mittag-Leffler function see \cite{MR3544397}, where the properties of such fractional 
differences are studied and discrete integration by parts formulas proved in order 
to obtain Euler--Lagrange equations for discrete variational problems \cite{MR3544397}. 
The readers interested in the discrete fractional calculus of variations are refereed 
to the pioneer work of Bastos et al. \cite{MR2728463,MyID:179}. Here we are interested 
in the generalized continuous calculus of variations introduced by Herglotz.

The generalized variational principle firstly proposed 
by Gustav Herglotz in 1930 \cite{her} gives a variational 
principle description of non-conservative systems even when 
the Lagrangian is autonomous \cite{MyID:281,MyID:319}.
It is essentially based on the following problem:
find the trajectories $x(t)$, satisfying given boundary conditions,
that extremize (minimize or maximize) the terminal value $z(b)$ 
of the functional $z$ that satisfies the differential equation
\begin{equation*}
\dot{z}(t)= L\left(t,x(t),\dot{x}(t),z(t)\right), \quad t\in [a,b],
\end{equation*}
subject to the initial condition $z(a)= \gamma$.
Herglotz proved that the necessary condition for a trajectory 
to be an extremizer of the generalized variational problem 
is to satisfy the generalized Euler--Lagrange equation
\begin{equation*}
\frac{\partial L}{\partial x}-\frac{d}{dt}
\frac{\partial L}{\partial \dot{x}}+\frac{\partial L}{\partial z}
\frac{\partial L}{\partial \dot{x}}=0.
\end{equation*} 
The main physical motivation for the development of generalized 
variational methods behind the classical calculus of variations
is linked to the inverse variational problem of classical mechanics 
in the cases where dissipation is not negligible \cite{inverse}. 
In \cite{almeida}, Almeida and Malinowska have considered 
a fractional variational Herglotz principle, 
where fractionality stands in the dependence of the Lagrangian 
by the Caputo fractional derivative of the generalized variables.
See also the more recent papers \cite{MyID:369,MyID:368}
on fractional Herglotz  variational principles.
In \cite{MyID:369}, new necessary conditions for higher-order 
generalized variational problems with time delay, which 
are semi-invariant under a group of transformations 
that depends on arbitrary functions, is obtained.
Fractional variational problems of Herglotz type of 
variable order are investigated in \cite{MyID:368},
where necessary optimality conditions, described by
fractional differential equations depending on 
a combined Caputo fractional derivative 
of variable order, are proved, both for one 
and several independent variables \cite{MyID:368}. 
Using such results, it is possible to find, by using a variational approach, 
the equations of motion of a dissipative mechanical system 
with memory. This is useful, since in many classical cases, 
memory effects play a relevant role in systems with dissipation.
A relevant example is given by the Basset memory force acting 
on a sphere rotating in a Stokes fluid. It is well-known that
this force can be represented by means of Caputo derivatives 
of order $1/2$ (see, e.g., \cite{noi} and references therein).
However, a limit in this approach stands to the fact that 
the particular choice of the dependence of the Lagrangian
by the Caputo fractional derivative of the generalized variable, 
implies that it describes equations of motion of systems with 
power-law memory kernels. On the other hand, in the framework 
of the fractional calculus of variations (a quite recent topic 
of research, started from the seminal investigations of Riewe 
\cite{Riewe} and then developed by many researchers, see for 
example the recent monographs 
\cite{MR3443073,book1,book:Klimek,MR3331286,book}), 
Odzijewicz et al. \cite{mali,MyID:268} discusses the case of the 
Lagrangian depending on generalized Caputo-type derivatives 
with arbitrary completely monotonic kernels \cite{MR3331286}. 
Here we consider a generalized calculus of variations 
in the sense of Herglotz with Lagrangians depending on 
generalized Caputo-type operators treated 
in \cite{MR3200762,MR3331286,mali}. Our aim is to find 
a general and adequate variational approach 
to describe mechanical systems with arbitrary memory forces.

The paper is organized as follows. In Section~\ref{sec:2},
we recall the necessary definitions and results
from the generalized fractional variational calculus.
Our results are then given in Sections~\ref{sec:3},
\ref{sec:4} and \ref{sec:5}: we prove in Section~\ref{sec:3}
necessary optimality conditions of Euler--Lagrange
type (Theorem~\ref{thm:EL}) and transversality conditions
(Theorem~\ref{thm:TC}) to the generalized 
fractional variational problem of Herglotz;
we show in Section~\ref{sec:4} how our approach can deal, 
in an elegant way, with dissipative dynamical systems 
with memory effects and time-varying mass and elasticity;
and we obtain a generalized fractional Herglotz Noether theorem
(Theorem~\ref{thm:Noether}) in Section~\ref{sec:5}.
We end with Section~\ref{sec:6} of conclusions
and some directions of future work.

        
\section{Preliminaries}
\label{sec:2}

In this section, we recall the main definitions of the 
generalized Riemann--Liouville and Caputo-like operators
and their properties, according to the analysis of
generalized fractional variational principles developed 
in \cite{MR3331286,mali}. For a general introduction 
to fractional differential operators and equations 
we refer to the classical encyclopedic book \cite{samko}. 
See also \cite{MR1658022}. For an introduction 
to the fractional variational methods, and in particular 
integration by parts formulas for fractional 
integrals and derivatives, we refer to the monographs 
\cite{book1,book}. For computational and numerical 
aspects see \cite{MR3443073,MR2838157,MyID:339}.

\begin{defn}
The operator $K_P^\alpha$ is given by
\begin{equation*}
\begin{split}
K_P^\alpha[f](x)&= K_P^\alpha[t\rightarrow f(t)](x)\\
&= p\int_a^x k_\alpha (x,t) f(t)dt
+ q\int_x^b k_\alpha (t,x) f(t)dt,
\end{split}
\end{equation*}
where $P= \langle a,x,b,p,q \rangle$ is the parameter set, 
$x\in [a,b]$, $p,q\in \mathbb{R}$, and $k_\alpha(x,t)$ is
a completely monotonic kernel. 
\end{defn}

For the sake of completeness, we should remark that similar 
generalizations of the Riemann--Liouville integrals have been 
considered in the framework of the fractional action-like 
variational approach (FALVA) \cite{MR2421931,fizika}. 
On the other hand, a similar generalization 
is considered in a probabilistic framework
in \cite{Toaldo}.

\begin{thm}[See \protect{\cite[Theorem~3]{mali}}]
Let $k_\alpha \in L_1([a,b])$ and 
$$
k_\alpha(x,t)= k_\alpha(x-t).
$$ 
Then, the operator $K_P^\alpha:L_1([a,b])\rightarrow L_1([a,b])$
is a well-defined bounded and linear operator. 
\end{thm}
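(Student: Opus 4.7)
The plan is to verify the three claims (linearity, well-definedness, and boundedness) in turn, with the latter two following from a single $L_1$-norm estimate via Fubini--Tonelli.

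\textbf{Linearity} is immediate: since $K_P^\alpha$ is built from two integrals against the kernel $k_\alpha$, linearity of the Lebesgue integral gives $K_P^\alpha[\lambda f + \mu g] = \lambda K_P^\alpha[f] + \mu K_P^\alpha[g]$ for scalars $\lambda,\mu$ and $f,g \in L_1([a,b])$.

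\textbf{Boundedness.} The main step is to estimate $\|K_P^\alpha[f]\|_{L_1}$. First I split via the triangle inequality into the contributions of the two terms,
\begin{equation*}
\|K_P^\alpha[f]\|_{L_1([a,b])} \le |p| \int_a^b \int_a^x |k_\alpha(x-t)|\,|f(t)|\,dt\,dx + |q| \int_a^b \int_x^b |k_\alpha(t-x)|\,|f(t)|\,dt\,dx.
\end{equation*}
Since both integrands are nonnegative and measurable, Fubini--Tonelli allows me to swap the order of integration. For the first term, this yields
\begin{equation*}
|p| \int_a^b |f(t)| \left( \int_t^b |k_\alpha(x-t)|\,dx \right) dt = |p| \int_a^b |f(t)| \left( \int_0^{b-t} |k_\alpha(u)|\,du \right) dt
\end{equation*}
after the change of variables $u = x - t$, and the inner integral is bounded by $\|k_\alpha\|_{L_1([a,b])}$ uniformly in $t$. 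An analogous estimate (with substitution $u = t - x$) handles the second term, giving
\begin{equation*}
\|K_P^\alpha[f]\|_{L_1([a,b])} \le (|p| + |q|)\,\|k_\alpha\|_{L_1([a,b])}\,\|f\|_{L_1([a,b])}.
\end{equation*}
This simultaneously shows that $K_P^\alpha[f] \in L_1([a,b])$ (so the operator is well-defined) and that it is bounded with operator norm at most $(|p|+|q|)\|k_\alpha\|_{L_1}$.

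\textbf{Main obstacle.} There is no genuinely hard step; the only care needed is to justify the application of Fubini--Tonelli, which requires the integrand $(x,t) \mapsto |k_\alpha(x-t)\,f(t)|$ to be jointly measurable on the triangular regions $\{a \le t \le x \le b\}$ and $\{a \le x \le t \le b\}$. This follows because $(x,t) \mapsto k_\alpha(x-t)$ is measurable as the composition of the measurable difference map with the measurable function $k_\alpha$, and $f$ is assumed measurable. With Tonelli applied to the nonnegative majorant, the switch is legitimate and the proof is complete.
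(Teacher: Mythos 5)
Your argument is correct, but note that the paper itself offers no proof of this statement: it is quoted verbatim as Theorem~3 of the cited reference \cite{mali} and used as a black box, so there is nothing in the present text to compare your approach against. What you have written is the standard proof -- essentially Young's convolution inequality $\|k_\alpha * f\|_{L_1}\le\|k_\alpha\|_{L_1}\|f\|_{L_1}$ applied to each of the two triangular pieces, with Tonelli justifying the interchange -- and it is the argument one would expect to find in \cite{mali}. Two small points of care, neither of which is a gap. First, after the substitution $u=x-t$ the inner integral runs over $[0,b-t]\subseteq[0,b-a]$, so the norm that actually appears is $\|k_\alpha\|_{L_1([0,b-a])}$ rather than $\|k_\alpha\|_{L_1([a,b])}$; this imprecision is inherited from the statement itself, where the one-variable function $k_\alpha$ is said to lie in $L_1([a,b])$ even though its natural domain is the set of differences $x-t$. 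Second, your measurability remark is slightly too quick as stated: the composition of a merely Lebesgue-measurable $k_\alpha$ with the continuous map $(x,t)\mapsto x-t$ need not be Lebesgue measurable in general; the standard fix is to replace $k_\alpha$ by a Borel representative (equal to it a.e.) and observe, via Fubini applied to the exceptional null set, that this does not change any of the integrals. With those cosmetic repairs your estimate $\|K_P^\alpha\|\le(|p|+|q|)\,\|k_\alpha\|_{L_1}$ is exactly right and gives well-definedness and boundedness simultaneously, as you say.
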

        
\begin{defn}
Let $P$ be a given parameter set and $\alpha\in (0,1)$. 
The operator $A^\alpha = D\circ K_P^{1-\alpha}$ is 
the generalized Riemann--Liouville derivative, where
$D$ stands for the conventional integer order derivative operator. 
\end{defn}

The corresponding generalized Caputo derivative is defined as
$B_P^\alpha = K_P^{1-\alpha}\circ D$. A key-role in the following 
analysis is played by the following theorem that provides 
the integration by parts formula for the generalized operators defined before.

\begin{thm}[See \protect{\cite[Theorem 11]{mali}}]
\label{thm:ibp}
Let $\alpha \in (0,1)$ and $P= \langle a, x, b, p, q \rangle$.
If $f, K_P^{1-\alpha}g \in AC([a,b])$, where 
$P^*= \langle a, x, b, q,p \rangle$ and $k_\alpha(\cdot)$ 
is a square-integrable function on $\Delta = [a,b]\times [a,b]$, then 
\begin{equation*}
\int_a^b g(x) B_P^\alpha[f](x)dx
= f(x)K_{P^*}^{1-\alpha}[g](x)\bigg|^a_b
-\int_a^b f(x) A_{P^*}^\alpha[g](x)dx.
\end{equation*}
\end{thm}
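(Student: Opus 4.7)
The plan is to unfold the definition $B_P^\alpha[f] = K_P^{1-\alpha}[Df]$, reorganize the resulting iterated integrals by Fubini so that the kernel gets absorbed into a $K_{P^*}^{1-\alpha}$ acting on $g$, and then finish with a single application of the classical integer-order integration by parts. The hypothesis $P^* = \langle a,x,b,q,p\rangle$ (with $p,q$ swapped) is essentially forced on us by this Fubini swap, so tracking that carefully is the bookkeeping heart of the proof.

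Concretely, I would first write out
\begin{equation*}
\int_a^b g(x)\, B_P^\alpha[f](x)\,dx
= p\int_a^b\!\int_a^x g(x) k_{1-\alpha}(x,t) f'(t)\,dt\,dx
+ q\int_a^b\!\int_x^b g(x) k_{1-\alpha}(t,x) f'(t)\,dt\,dx .
\end{equation*}
Both double integrals live on the triangles $\{a\le t\le x\le b\}$ and $\{a\le x\le t\le b\}$ respectively. Swapping the order of integration in each (which is legal by the square-integrability assumption on $k_\alpha$ together with $f'\in L_1$, through Fubini--Tonelli), the first becomes $p\int_a^b f'(t)\bigl[\int_t^b k_{1-\alpha}(x,t) g(x)\,dx\bigr]dt$ and the second becomes $q\int_a^b f'(t)\bigl[\int_a^t k_{1-\alpha}(t,x) g(x)\,dx\bigr]dt$. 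Adding and recalling that in $P^*$ the weights $p$ and $q$ are interchanged, the bracketed sum is exactly $K_{P^*}^{1-\alpha}[g](t)$, so the whole left-hand side collapses to $\int_a^b f'(t) K_{P^*}^{1-\alpha}[g](t)\,dt$.

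Now apply classical integration by parts to this single integral: since both $f$ and $K_{P^*}^{1-\alpha}[g]$ are absolutely continuous on $[a,b]$ by hypothesis, we get
\begin{equation*}
\int_a^b f'(t) K_{P^*}^{1-\alpha}[g](t)\,dt
= f(t) K_{P^*}^{1-\alpha}[g](t)\Big|_b^a
- \int_a^b f(t)\, \frac{d}{dt}K_{P^*}^{1-\alpha}[g](t)\,dt .
\end{equation*}
By definition, $\frac{d}{dt}K_{P^*}^{1-\alpha}[g] = D\circ K_{P^*}^{1-\alpha}[g] = A_{P^*}^\alpha[g]$, and the formula in the statement follows.

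The main obstacle is the Fubini step: one must know that the combined integrand $|g(x) k_{1-\alpha}(x,t) f'(t)|$ (and its $q$-counterpart) is integrable over the triangle. This is precisely why the statement imposes square-integrability of $k_\alpha$ on $\Delta=[a,b]\times[a,b]$ — with that, Cauchy--Schwarz against $g\in L_1\subset L_2$-type control and the absolute continuity of $f$ (so $f'\in L_1$) make the swap rigorous. Once that technical point is in hand, the rest is essentially the same trick as in the classical Riemann--Liouville integration-by-parts identity, generalized only by carrying the parameters $p,q$ through the two triangular pieces and noticing that they are exchanged in passing from $K_P$ to $K_{P^*}$.
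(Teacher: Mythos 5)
The paper does not actually prove this statement: it is imported verbatim from the cited source \cite[Theorem~11]{mali}, so there is no internal proof to compare against. Your argument — unfold $B_P^\alpha=K_P^{1-\alpha}\circ D$, use Fubini on the two triangles $\{t\le x\}$ and $\{x\le t\}$ to transfer the kernel onto $g$, observe that the swap of integration order exactly interchanges the roles of $p$ and $q$ so that the inner integrals reassemble into $K_{P^*}^{1-\alpha}[g]$, and finish with one classical integration by parts using the assumed absolute continuity — is the standard proof of this identity and is essentially the one given in the cited reference. The bookkeeping of the $p\leftrightarrow q$ exchange is done correctly.

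Two small blemishes, neither fatal. First, the classical integration by parts yields the boundary term with orientation $f(t)K_{P^*}^{1-\alpha}[g](t)\big|_a^b$, i.e.\ the value at $b$ minus the value at $a$; you wrote $\big|_b^a$ to match the paper's $\big|^a_b$, which under the usual convention is the opposite sign. One of the two is a typo (it is immaterial in this paper, since the boundary term is always set to zero), but you should be aware that your own derivation produces $f(b)K_{P^*}^{1-\alpha}[g](b)-f(a)K_{P^*}^{1-\alpha}[g](a)$. Second, the justification of Fubini is garbled: on a bounded interval the inclusion goes $L_2\subset L_1$, not ``$L_1\subset L_2$'', and Cauchy--Schwarz with $k_{1-\alpha}\in L_2(\Delta)$ and $f'\in L_1$ alone does not close the estimate; the cleaner route is to use boundedness of $g$ together with the mapping property $K_P^{1-\alpha}:L_1\to L_1$ (the paper's quoted Theorem~2.2 for difference kernels), which makes $\int_a^b |g(x)|\,K_P^{1-\alpha}[|f'|](x)\,dx$ finite and legitimizes the interchange. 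With those two points repaired, the proof is complete.
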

        
        
\section{Generalized fractional Herglotz variational principles}
\label{sec:3}
        
One of the main aims of this work is to prove generalized Euler--Lagrange 
equations related to the generalized fractional variational 
principle of Herglotz. In particular, the generalization 
is based on the fact that the Lagrangian depends 
on the generalized Caputo derivative $B_P^\alpha$. 
As explained before, by using this approach, 
we will be able to find a variational approach to mechanical systems 
involving an arbitrary (suitable) memory kernel $k_\alpha (t)$.
Therefore, let us consider the differential equation
\begin{equation}
\label{z}
\dot{z}(t)= L\left(t,x(t), B_P^\alpha[x](t),z(t)\right), 
\quad t\in [a,b],
\end{equation}
with the initial condition $z(a)= z_a$. We moreover assume that 
\begin{itemize}
\item $x(a)= x_a$, $x(b)= x_b$, $x_a,x_b\in \mathbb{R}^n$,
\item $\alpha \in (0,1)$,
\item $x\in C^1([a,b],\mathbb{R}^n)$, 
$B_P^\alpha[x]\in C^1([a,b],\mathbb{R}^n)$,
\item the Lagrangian $L:[a,b]\times \mathbb{R}^{2n+1}
\rightarrow \mathbb{R}$ is of class $C^1$ and the maps 
$t\rightarrow \lambda(t)\frac{\partial L}{\partial B_P^\alpha x_j}[x,z](t)$
exist and are continuous on $[a,b]$, where we use the notations
\begin{gather*}
[x,z](t):=(t,x(t),B_P^\alpha[x](t),z(t)),\\ 
\lambda(t)= \exp\left(
-\int_a^t \frac{\partial L}{\partial z}[x,z](\tau)d\tau \right).
\end{gather*}
\end{itemize}

The generalized fractional Herglotz variational principle 
is formulated as follows:
\begin{quote}
\textit{Let functional $z(t)= z[x;t]$ be given by the differential 
equation \eqref{z} and $\eta\in C^1([a,b], \mathbb{R})$
be an arbitrary function such that $\eta(a)=\eta(b)=0$
and $B_P^\alpha[\eta]\in C^1([a,b],\mathbb{R})$.
Then, the value of the functional $z$ has an extremum 
for the function $x$ if and only if
\begin{equation*}
\left.\frac{d}{d\epsilon}z[x+\epsilon \eta;b]\right|_{\epsilon =0}=0.
\end{equation*}}
\end{quote}     		

We are now able to state and prove a generalized fractional
necessary optimality condition of Euler--Lagrange type that,
together with the fractional Herglotz Noether theorem
(see Theorem~\ref{thm:Noether}), constitute the central 
results of the paper.

\begin{thm}[Generalized fractional Herglotz Euler--Lagrange equations]
\label{thm:EL}
Let function $x$ be such that $z[x;b]$ in \eqref{z} attains an extremum.
Then $x(t)$, $t \in [a,b]$, is a solution to the generalized 
Euler--Lagrange equations
\begin{equation}
\label{elg}
\lambda(t)\frac{\partial L}{\partial x_j}[x,z](t)+A^{\alpha}_{P^*}
\left(\lambda(t)\frac{\partial L}{\partial B_P^\alpha x_j}[x,z](t)\right)= 0,
\end{equation}
$j = 1, \ldots, n$.
\end{thm}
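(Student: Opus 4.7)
The plan is to use the standard first-variation machinery, adapted to the Herglotz setting in which the cost is defined implicitly by an ODE. Fix an admissible variation $x^\epsilon(t) = x(t) + \epsilon\,\eta(t)$ with $\eta_j(a) = \eta_j(b) = 0$ and $B_P^\alpha[\eta] \in C^1$, and let $z^\epsilon$ be the associated solution of \eqref{z} with the same initial datum $z_a$. Introduce the sensitivity function $\theta(t) := \partial z^\epsilon(t)/\partial\epsilon\,\big|_{\epsilon=0}$. Using the linearity of $B_P^\alpha$ (so that $B_P^\alpha[x^\epsilon] = B_P^\alpha[x] + \epsilon\,B_P^\alpha[\eta]$) and the $C^1$-regularity of $L$, smooth dependence on parameters gives $\theta \in C^1$, and differentiating \eqref{z} with respect to $\epsilon$ at $\epsilon=0$ via the chain rule yields the linear first-order ODE
$$\dot\theta(t) - \frac{\partial L}{\partial z}[x,z](t)\,\theta(t) = \sum_{j=1}^{n}\left(\frac{\partial L}{\partial x_j}[x,z](t)\,\eta_j(t) + \frac{\partial L}{\partial B_P^\alpha x_j}[x,z](t)\,B_P^\alpha[\eta_j](t)\right),$$
together with $\theta(a) = 0$, since $z^\epsilon(a) = z_a$ is independent of $\epsilon$.

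The factor $\lambda(t)$ defined in the statement satisfies $\dot\lambda = -\lambda\,\partial L/\partial z[x,z]$, so it is precisely an integrating factor for this ODE: multiplying by $\lambda$ turns the left-hand side into $(\lambda\theta)'$. Integrating over $[a,b]$, the boundary contribution is $\lambda(b)\theta(b) - \lambda(a)\theta(a) = 0$, because $\theta(a) = 0$ and the Herglotz extremality condition gives $\theta(b) = \frac{d}{d\epsilon}z[x+\epsilon\eta;b]\big|_{\epsilon=0} = 0$. Hence
$$0 = \int_a^b \sum_{j=1}^{n}\lambda(t)\left[\frac{\partial L}{\partial x_j}[x,z](t)\,\eta_j(t) + \frac{\partial L}{\partial B_P^\alpha x_j}[x,z](t)\,B_P^\alpha[\eta_j](t)\right]dt.$$

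To extract a pointwise condition, I shift $B_P^\alpha$ off $\eta_j$ by invoking the fractional integration-by-parts formula, Theorem~\ref{thm:ibp}, with $f = \eta_j$ and $g = \lambda\,\partial L/\partial B_P^\alpha x_j[x,z]$. The hypotheses on $L$, the assumed continuity of $t\mapsto \lambda(t)\,\partial L/\partial B_P^\alpha x_j(t)$, and the $C^1$-regularity of $B_P^\alpha[x]$ and $B_P^\alpha[\eta]$ are exactly what is needed to place $f$ and $K_{P^*}^{1-\alpha}g$ in $AC([a,b])$, and the boundary term vanishes because $\eta_j(a) = \eta_j(b) = 0$. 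What is left is
$$0 = \sum_{j=1}^{n}\int_a^b \eta_j(t)\left[\lambda(t)\frac{\partial L}{\partial x_j}[x,z](t) + A^{\alpha}_{P^*}\!\left(\lambda\,\frac{\partial L}{\partial B_P^\alpha x_j}[x,z]\right)(t)\right]dt,$$
with the overall sign in front of the $A^\alpha_{P^*}$-term dictated by the convention of Theorem~\ref{thm:ibp}. Since the components $\eta_j$ are independent and arbitrary with fixed endpoints, the fundamental lemma of the calculus of variations, applied one $j$ at a time, delivers \eqref{elg}.

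The main obstacle is the very first step: rigorously justifying that $\epsilon \mapsto z^\epsilon(t)$ is differentiable and that the resulting $\theta$ actually satisfies the variational ODE above. This is where all the $C^1$-hypotheses in the theorem (on $L$, on $B_P^\alpha[x]$ and $B_P^\alpha[\eta]$, and on $t\mapsto \lambda(t)\,\partial L/\partial B_P^\alpha x_j$) enter, combined with standard smooth-dependence results for ODEs; it is technical but routine. Once secured, the rest is a clean combination of the classical Herglotz integrating-factor trick with a single application of the fractional integration-by-parts identity in Theorem~\ref{thm:ibp}.
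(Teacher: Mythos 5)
Your proposal is correct and follows essentially the same route as the paper's own proof: the sensitivity function $\theta$, the linear variational ODE with $\lambda$ as integrating factor, the vanishing of $\theta(a)$ and $\theta(b)$, a single application of Theorem~\ref{thm:ibp}, and the fundamental lemma. The only point you gloss over (``the overall sign \ldots dictated by the convention of Theorem~\ref{thm:ibp}'') is glossed over in exactly the same way by the paper, whose statement of the integration-by-parts formula would literally produce a minus sign in front of the $A^{\alpha}_{P^*}$ term rather than the plus sign appearing in \eqref{elg}.
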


\begin{proof}
The proof uses the generalized integration by parts 
formula for Caputo-like operators $B_P^\alpha$ given
by Theorem~\ref{thm:ibp}. Let $x$ be such that the functional 
$z[x;b]$ attains an extremum. The rate of change of $z$ 
in the direction $\eta$ is given by 
\begin{equation*}
\theta(t) = \frac{d}{d\epsilon}z[x+\epsilon \eta;t]\bigg|_{\epsilon=0}. 
\end{equation*}
The variation $\epsilon \eta $ of the argument 
in equation \eqref{z} is given by
\begin{equation*}
\frac{d}{dt}z[x+\epsilon \eta;t]
= L\left(t, x(t)+\epsilon \eta(t), B_P^\alpha[x](t)
+\epsilon B_P^\alpha[\eta](t), z[x+\epsilon \eta; t]\right).
\end{equation*}
Observing that, from equation \eqref{z}, we have
\begin{equation*}
\frac{d}{dt}\theta(t) 
= \frac{d}{d\epsilon}L\bigg(t,x(t)+\epsilon \eta(t),
\left. B_P^\alpha[x](t)+\epsilon B_P^\alpha[\eta](t), 
z[x+\epsilon \eta;t]\bigg)\right|_{\epsilon = 0}, 	
\end{equation*}
this gives a differential equation of the form 
\begin{equation*}
\frac{d\theta(t)}{dt}-\frac{\partial L}{\partial z}[x,z](t)\theta(t)
= \sum_{j=1}^n \left(\frac{\partial L}{\partial x_j}[x,z](t)\eta_j(t)
+\frac{\partial L}{\partial B^{\alpha}_P x_j}B_P^{\alpha}[\eta_j](t)\right),
\end{equation*}
whose solution is 
\begin{equation*}
\int_a^t \sum_{j=1}^n \left(\frac{\partial L}{\partial x_j}[x,z](t)\eta_j(t)
+\frac{\partial L}{\partial B^{\alpha}_P x_j}
B_P^{\alpha}[\eta_j](t)\right)\lambda(t)dt
=\theta(t)\lambda(t)-\theta(a),
\end{equation*}
where $\lambda(t)= \exp\left(-\int_a^t 
\frac{\partial L}{\partial z}[x,z](\tau)d\tau\right)$ 
and $\theta(a)=0$. For $t= b$, we get
\begin{equation*}
\int_a^b \sum_{j=1}^n \left(\frac{\partial L}{\partial x_j}[x,z](t)\eta_j(t)
+\frac{\partial L}{\partial B^{\alpha}_P x_j}
B_P^{\alpha}[\eta_j](t)\right)\lambda(t)dt
= \theta(b)\lambda(t).
\end{equation*}
Since $\theta(b)$ is the variation of $z[x;b]$, if $x$ 
gives a maximum, also $\theta(b)=0$, and therefore we get 
\begin{equation*}
\int_a^b\sum_{j=1}^n \left(\frac{\partial L}{\partial x_j}[x,z](t)
\eta_j(t)+\frac{\partial L}{\partial B^{\alpha}_P x_j}
B_P^{\alpha}[\eta_j](t)\right)\lambda(t)dt= 0.
\end{equation*}
Then, by using the integration by parts formula 
(see Theorem~\ref{thm:ibp}), and the fact that 
$\eta(a)= \eta(b)=0$, we obtain the claimed result
by the fundamental lemma of the calculus of variations.
\end{proof}

\begin{rem}
Let $k_\alpha(x,t) = \frac{1}{\Gamma(1-\alpha)} (x-t)^{-\alpha}$,
$\alpha \in (0,1)$, and the parameter set be given by
$P= \langle a,x,b,1,0 \rangle$. In this particular case,
the operator $B_P^{\alpha}$ coincides with the standard
Caputo fractional derivative $_a^CD_x^\alpha$,
and our Theorem~\ref{thm:EL} gives the Euler--Lagrange
equation of \cite{almeida}.
\end{rem}
     	
Observe that, in order to determine uniquely the unknown function 
that satisfies equation \eqref{z}, the following system 
of differential equations
\begin{equation}
\label{z1}
\begin{cases}
\dot{z}(t)= L[x,z](t),\\
\displaystyle{\lambda(t)\frac{\partial L}{\partial x_j}[x,z](t)
+A^{\alpha}_{P^*}\left(\lambda(t)\frac{\partial L}{\partial 
B_P^\alpha x_j}[x,z](t)\right)= 0}, 
\end{cases}
\end{equation}
$j = 1, \ldots, n$, should be studied with the given boundary conditions. 
In the case $x_j(b)$ is not fixed, similar arguments as those used 
in the proof of Theorem~\ref{thm:EL} allow us to obtain the generalized 
fractional integral transversality conditions \eqref{eq:TC}. 

\begin{thm}[Generalized fractional Herglotz transversality conditions]
\label{thm:TC}
Let $x$ be such that $z(b)=z[x;b]$ in equation \eqref{z} attains an extremum. 
Then $x$ is a solution to the system \eqref{z1}. Moreover, if $x_j(b)$ 
is not fixed, $j \in \{ 1, \dots, n\}$, 
then the integral transversality condition
\begin{equation}	
\label{eq:TC}
K_{P^*}^{1-\alpha}\left[t\rightarrow \lambda(t)
\frac{\partial L}{\partial B_P^{\alpha}x_j}[x,z](t)\right](b)= 0
\end{equation}
holds. 
\end{thm}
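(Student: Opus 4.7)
The plan is to adapt the variational argument used for Theorem~\ref{thm:EL}, but relaxing the boundary restriction on the variation $\eta$ at $t=b$. Specifically, since $x(a)=x_a$ remains prescribed, we still require admissible variations to satisfy $\eta(a)=0$; however, if $x_j(b)$ is free for some index $j$, we now allow $\eta_j(b)$ to be arbitrary. All other components $\eta_i$, $i\neq j$, are kept with $\eta_i(a)=\eta_i(b)=0$ so that they do not interfere.

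First I would repeat, verbatim, the derivation of the identity
\begin{equation*}
\int_a^b\sum_{j=1}^n \left(\frac{\partial L}{\partial x_j}[x,z](t)\eta_j(t)
+\frac{\partial L}{\partial B^{\alpha}_P x_j}[x,z](t)\,
B_P^{\alpha}[\eta_j](t)\right)\lambda(t)\,dt = 0
\end{equation*}
obtained in the proof of Theorem~\ref{thm:EL}; this identity only used $\eta(a)=0$ and $\theta(a)=0$, together with the extremality condition $\theta(b)=0$, so it remains valid in the present setting. Next, I would apply Theorem~\ref{thm:ibp} to each term involving $B_P^{\alpha}[\eta_j]$, producing
\begin{equation*}
\int_a^b \lambda(t)\frac{\partial L}{\partial B_P^\alpha x_j}[x,z](t)\,
B_P^{\alpha}[\eta_j](t)\,dt
= \eta_j(t)\,K_{P^*}^{1-\alpha}\!\left[\lambda\frac{\partial L}{\partial B_P^\alpha x_j}[x,z]\right]\!(t)\bigg|_a^b
-\int_a^b \eta_j(t)\,A_{P^*}^{\alpha}\!\left(\lambda\frac{\partial L}{\partial B_P^\alpha x_j}[x,z]\right)\!(t)\,dt.
\end{equation*}

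The argument then splits into two stages. In the first stage, I restrict to variations with $\eta_j(b)=0$ for all $j$; the boundary terms disappear, and the fundamental lemma of the calculus of variations applied to the arbitrary $\eta_j$ yields the generalized Euler--Lagrange equations \eqref{elg}, so $x$ solves the system \eqref{z1}. In the second stage, I return to the full identity and use the Euler--Lagrange equations just established to cancel the bulk integrals. What survives is the boundary contribution
\begin{equation*}
\eta_j(b)\,K_{P^*}^{1-\alpha}\!\left[t\mapsto \lambda(t)\frac{\partial L}{\partial B_P^\alpha x_j}[x,z](t)\right]\!(b)
-\eta_j(a)\,K_{P^*}^{1-\alpha}\!\left[t\mapsto \lambda(t)\frac{\partial L}{\partial B_P^\alpha x_j}[x,z](t)\right]\!(a) = 0.
\end{equation*}
Since $\eta_j(a)=0$ while $\eta_j(b)$ is arbitrary, the transversality condition \eqref{eq:TC} follows immediately.

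I do not anticipate a genuine obstacle: the work is entirely mechanical once Theorems~\ref{thm:EL} and \ref{thm:ibp} are in place. The only subtlety worth flagging is the logical order, namely that one must first prove \eqref{elg} using variations vanishing at both endpoints before isolating the boundary term by choosing variations that vanish only at $t=a$; reversing this order would leave an interior integral that cannot, on its own, be concluded to vanish by the fundamental lemma. The regularity hypotheses stated before Theorem~\ref{thm:EL} ensure that the kernel $K_{P^*}^{1-\alpha}$ acts on the required map in the sense needed for Theorem~\ref{thm:ibp}, so no additional assumptions are required.
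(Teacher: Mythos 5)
Your proposal is correct and follows exactly the route the paper intends: the paper itself gives no explicit proof of Theorem~\ref{thm:TC}, remarking only that ``similar arguments as those used in the proof of Theorem~\ref{thm:EL}'' yield the transversality condition, and your two-stage argument (first deriving \eqref{elg} from variations vanishing at both endpoints, then freeing $\eta_j(b)$ so that only the boundary term of the integration by parts formula of Theorem~\ref{thm:ibp} survives) is the standard way to fill in that sketch. Your remark about the logical order of the two stages is the right subtlety to flag, and the sign convention in the evaluated boundary term is immaterial since $\eta_j(b)$ is arbitrary.
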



\section{An application: generalized damped harmonic oscillator with memory effects}
\label{sec:4}

As already discussed in the literature, generalized variational 
methods can be useful to treat the inverse problem for dissipative 
systems where memory or damping effects are not negligible. 
For example, in \cite{mali}, the authors discussed an application 
of generalized variational problems to the Caldirola--Kanai approach 
to quantum dissipative systems; while in \cite{noi} an application 
to the inverse problem for the Basset system was discussed. 
One can argue that the starting point to research on 
generalized variational problems was given by the Lemma of Bauer 
\cite{Bauer}, stating that the equations of motion 
of a classical dissipative system with constant coefficients 
cannot be derived from a classical variational approach. 
Here we show the peculiarity of the approach considered in this paper, 
to treat dissipative dynamical systems with memory effects 
and time-varying mass and elasticity. Let us consider 
the mechanical system described by the following autonomous Lagrangian:
\begin{equation}
\label{eq:Lag:appl}
L\left(x(t), B_P^\alpha[x](t),z(t)\right) 
= \frac{1}{2}m\left( B_P^\alpha[x](t) \right)^2 
- \frac{1}{2}kx^2(t) + \lambda_0 z(t),
\end{equation}
where we take $a=0$, $b>0$, $p=1$ and $q= 0$.
By using the generalized Euler--Lagrange equations discussed 
in Theorem \ref{thm:EL}, we have that the equation 
of motion in this case is given by
\begin{equation}
\label{dho}
m A_{P^*}^\alpha\left(e^{-\lambda_0 t}
\left( B_P^\alpha[x](t) \right)\right)
- k e^{-\lambda_0 t}x(t) = 0.
\end{equation}
Equation \eqref{dho} describes a dynamical oscillatory system 
with exponentially time-decaying mass and elasticity 
coefficient and arbitrary memory. The generalized \textit{velocity} 
$B_P^\alpha[x](t)$ can be physically interpreted as a time--weighted 
velocity, where memory effects are induced by viscosity 
(and clearly depends by the relaxation kernel $k_\alpha$ 
in the definition of the generalized operators $B_P^\alpha$ 
and $A_{P^*}^\alpha$). Therefore, the generalized Herglotz approach 
here considered provides a variational method to treat 
the inverse problem of a damped harmonic oscillator 
with time-depending mass and elasticity (as a consequence 
of the Herglotz approach) and with arbitrary memory effect 
implying a velocity delay (due to the dependence of the Lagrangian 
by a generalized fractional integral). Clearly, in the case 
in which the memory is neglected, that is, $\alpha \rightarrow 1$,
the Lagrangian \eqref{eq:Lag:appl} takes the form
\begin{equation}
\label{eq:L:a1}
L\left(x(t),\dot{x}(t),z(t)\right)
= \frac{1}{2}m\dot{x}^2(t) - \frac{1}{2}k x^2(t)+\lambda_0 z(t)
\end{equation}
and we recover from \eqref{dho} the equation 
$m \frac{d}{dt}\left(e^{-\lambda_0 t}\dot{x}(t) \right)
+k e^{-\lambda_0 t}x(t) = 0$
of an harmonic oscillator with dissipation. 
Moreover, if $\lambda_0 = 0$, then we are in the case in which 
the Lagrangian \eqref{eq:L:a1} does not depend on $z(t)$ 
and we have the classical equation of the harmonic oscillator:
if $\alpha \rightarrow 1$ and $\lambda_0 = 0$, then
the Euler--Lagrange equation \eqref{dho} reduces 
to $m \ddot{x}(t) + k x(t) = 0$.


\section{Noether theorem for generalized 
fractional Herglotz variational problems} 
\label{sec:5}
 
The analysis of possible generalizations of Noether-type theorems 
to fractional and Herglotz variational principles, has been subject 
of recent research: we refer, for example, to \cite{MyID:319,MyID:327} 
and the references therein. Noether-like theorems play indeed 
a key-role in mathematical-physics by giving the relation between 
the invariance of the action with respect to some parametric 
transformation and the existence of conserved quantities
\cite{MR1980565,MR2099056}. Here we consider a one-parameter 
family of transformations
\begin{equation}
\label{tra}
\bar{x}_j = h_j(t,x,s), \quad j = 1,\ldots, n,
\end{equation}
depending on the parameter $s\in (-\epsilon,+\epsilon)$, 
with $h_j\in C^2$ such that 
$$
h_j(t,x,0)= x_j \text{ for all } (t,x)\in [a,b]\times \mathbb{R}^n.
$$
The Taylor expansion is given by 
\begin{equation*}
\begin{split}
h_j(t,x,\delta)
&= h_j(t,x,0)+s\xi_j(t,x)+o(s)\\
&= x_j+s\xi_j(t,x)+o(s),
\end{split}
\end{equation*}
where $\xi_j(t,x)= \partial_s h_j(t,x,s)\bigg|_{s=0}$.
In this case the linear approximation of the transformation 
\eqref{tra} is simply given by 
\begin{equation}
\label{trans}
\bar{x}_j(t)= h_j(t,x(t),s), \quad j=1, \ldots, n.
\end{equation}
Let 
$$
\theta(t) = \frac{d}{ds}\bar{z}[x+s\xi,t]|_{s=0}
$$
be the total variation produced by the transformation \eqref{tra}. 

\begin{defn}
\label{def:inv}
The transformation $\bar{x}$ given by \eqref{tra} 
leaves the functional $z$ \eqref{z} invariant if 
$\theta(t) \equiv 0$.
\end{defn}

By using the generalized fractional Euler--Lagrange equation \eqref{elg}
of Theorem~\ref{thm:EL}, we are now able to state the following 
Noether-type result.

\begin{thm}[Generalized fractional Herglotz Noether theorem]
\label{thm:Noether}
If the functional $z$ in \eqref{z} is invariant in the sense 
of Definition~\ref{def:inv}, then
\begin{equation*}
\sum_{j=1}^n \mathcal{\hat{O}}^{\alpha}\bigg[
\lambda(t)\frac{\partial L}{\partial B_P^{\alpha}x_j}[x,z](t), 
\xi_j(t,x(t))\bigg]=0
\end{equation*}
holds along the solutions of the generalized Euler--Lagrange 
equation \eqref{elg}, where 
$$
\displaystyle \lambda(t)
= \exp\left(-\int_0^t \frac{\partial L}{\partial z}[x,z](\tau)
d\tau\right)
$$
and
$$
\mathcal{\hat{O}}^\alpha [f,g]:=fB_P^\alpha g- gA_{P^*}^\alpha f.
$$
\end{thm}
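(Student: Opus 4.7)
The plan is to mirror the analysis used in the proof of Theorem~\ref{thm:EL}, but now exploiting the stronger hypothesis that $\theta(t)\equiv 0$ (invariance) rather than only $\theta(b)=0$ (extremality), and finally feeding the Euler--Lagrange equation \eqref{elg} back into the resulting identity.

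First, I would differentiate the defining equation \eqref{z} for $z[\bar x;t]=z[x+s\xi;t]$ with respect to the group parameter $s$ and evaluate at $s=0$. Since $\bar x_j = x_j + s\xi_j + o(s)$, the chain rule yields the linear non-homogeneous ODE
\begin{equation*}
\dot\theta(t)-\frac{\partial L}{\partial z}[x,z](t)\,\theta(t)
=\sum_{j=1}^n\left(\frac{\partial L}{\partial x_j}[x,z](t)\,\xi_j(t,x(t))
+\frac{\partial L}{\partial B_P^\alpha x_j}[x,z](t)\,B_P^\alpha[\xi_j](t)\right),
\end{equation*}
which is exactly the same type of equation that appeared in the proof of Theorem~\ref{thm:EL}, only with $\xi_j$ in place of $\eta_j$. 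Multiplying by the integrating factor $\lambda(t)$ (and noting $\dot\lambda=-\tfrac{\partial L}{\partial z}\lambda$) converts the left-hand side into $\tfrac{d}{dt}\bigl(\lambda(t)\theta(t)\bigr)$.

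Next, I would invoke Definition~\ref{def:inv}: invariance forces $\theta(t)\equiv 0$, hence $\tfrac{d}{dt}(\lambda\theta)\equiv 0$ as well. The previous line therefore reduces to the pointwise identity
\begin{equation*}
\sum_{j=1}^n\left(\lambda(t)\frac{\partial L}{\partial x_j}[x,z](t)\,\xi_j(t,x(t))
+\lambda(t)\frac{\partial L}{\partial B_P^\alpha x_j}[x,z](t)\,B_P^\alpha[\xi_j](t)\right)=0
\end{equation*}
for every $t\in[a,b]$, valid along any admissible extremal. At this point I would use the Euler--Lagrange equation \eqref{elg} to substitute $\lambda(t)\tfrac{\partial L}{\partial x_j}[x,z](t)=-A_{P^*}^\alpha\!\bigl(\lambda(t)\tfrac{\partial L}{\partial B_P^\alpha x_j}[x,z]\bigr)(t)$ in the first summand. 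The two remaining terms are precisely $\lambda\tfrac{\partial L}{\partial B_P^\alpha x_j}\cdot B_P^\alpha[\xi_j]-\xi_j\cdot A_{P^*}^\alpha\!\bigl(\lambda\tfrac{\partial L}{\partial B_P^\alpha x_j}\bigr)$, which, by the very definition of $\mathcal{\hat{O}}^\alpha[f,g]=fB_P^\alpha g - gA_{P^*}^\alpha f$ with $f=\lambda\tfrac{\partial L}{\partial B_P^\alpha x_j}$ and $g=\xi_j$, is $\mathcal{\hat{O}}^\alpha\!\bigl[\lambda\tfrac{\partial L}{\partial B_P^\alpha x_j},\xi_j\bigr]$, giving the stated conservation law.

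The only delicate step is the first one: I must justify differentiating under the operator $B_P^\alpha$ in $s$ and commuting $\partial_s$ with $B_P^\alpha$, so that $B_P^\alpha[\bar x_j]=B_P^\alpha[x_j]+sB_P^\alpha[\xi_j]+o(s)$ in a sense strong enough to differentiate the ODE for $\bar z$. This is legitimate because $B_P^\alpha$ is linear, $\xi_j(\cdot,x(\cdot))\in C^1$ by the $C^2$ assumption on $h_j$, and the regularity hypotheses on the Lagrangian made in Section~\ref{sec:3} already provide the continuity needed to exchange $\partial_s$ with the integral defining $K_P^{1-\alpha}$; the remainder of the argument is then purely algebraic.
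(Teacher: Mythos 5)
Your proposal is correct and follows essentially the same route as the paper: differentiate \eqref{z} with respect to the group parameter, use invariance ($\theta\equiv 0$), and substitute the Euler--Lagrange equation \eqref{elg} to recognize the operator $\mathcal{\hat{O}}^{\alpha}$. The only cosmetic difference is that you work with the pointwise identity obtained from the integrating factor, whereas the paper first writes the integrated form \eqref{aa} and then substitutes; since the claimed conclusion is pointwise, your version is if anything slightly more direct.
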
   
  
\begin{proof}
By using the transformation \eqref{trans} 
into equation \eqref{z}, we get 
\begin{equation*}
\frac{d}{dt}\bar{z}(t) = L\left(t,\bar{x}(t), 
B_P^\alpha \bar{x}(t), \bar{z}(t)\right).
\end{equation*}
Differentiating with respect to $s$ and setting $s=0$, we obtain
\begin{equation}
\label{due}
\dot{\theta}(t)-\frac{\partial L}{\partial z} \theta(t)
= \sum_{j=1}^n\left(\frac{\partial L}{\partial x_j}\xi_j 
+\frac{\partial L}{\partial B_P^{\alpha} x_j}B_P^{\alpha}\xi_j\right),
\end{equation}
where we omit, in order to simplify the notation, 
that the partial derivatives of $L$ are evaluated at 
$[x,z](t)$ and $\xi_j$ at $(t,x(t))$. The solution 
of \eqref{due} is given by
\begin{equation}
\label{aa}
\theta(t)\lambda(t)-\theta(a)
= \int_a^t\sum_{j=1}^n\left(\frac{\partial L}{\partial x_j}\xi_j 
+\frac{\partial L}{\partial B_P^{\alpha} x_j}
B_P^{\alpha}\xi_j\right)\lambda(\tau)d\tau.
\end{equation}
Since, along the solutions of the generalized Euler--Lagrange 
equation \eqref{elg}, we have that
\begin{equation}
\label{eq:lambdaLxj}
\lambda(t)\frac{\partial L}{\partial x_j}
= -A_{P^*}^{\alpha}\left(\lambda(t)\frac{\partial L}{\partial 
B_P^{\alpha} x_j}\right),
\end{equation}
we obtain the claimed result
by substituting \eqref{eq:lambdaLxj} into \eqref{aa}.
\end{proof} 	

        
\section{Conclusions}
\label{sec:6}       

We introduced the study
of fractional variational problems of Herglotz type
that depend on generalized fractional operators
in the sense of \cite{MR3331286,MyID:288}. 
As a particular case, one gets a generalization 
of the Herglotz variational principle 
for non-conservative  systems with Caputo derivatives.
Main results give a necessary optimality condition of Euler--Lagrange
type (Theorem~\ref{thm:EL}), integral transversality 
conditions (Theorem~\ref{thm:TC}), and a Noether-type theorem
(Theorem~\ref{thm:Noether}). Our motivation comes 
from physics, where such variational principles can be 
used to describe mechanical systems with memory of arbitrary form. 
As an application, a fractional mechanical system is analyzed  
with a fractionally generalized velocity that reproduces, for $\alpha = 1$,  
the standard Lagrangian of a harmonic oscillator with exponential damping,
which also contains the non-damped conservative oscillator. 

The research here initiated can now be enriched in different directions, 
by trying to bring to the fractional setting the recent results  
\cite{MyID:281,MyID:313,MyID:319,MyID:327,MyID:342}
of Santos et al. on Herglotz variational problems. 

        
\subsection*{Acknowledgments}

Torres was supported by Portuguese funds through CIDMA and FCT,
within project UID/MAT/04106/2013. The authors are grateful to 
two anonymous referees for valuable comments and suggestions, 
which helped them to improve the quality of the paper. 



\end{document}